\patchcmd{\section}{\scshape}{\bfseries}{}{}
\renewcommand{\@secnumfont}{\bfseries}
\theoremstyle{definition}
\newtheorem{thm}{Theorem}
\newtheorem*{thm*}{Theorem}
\newtheorem{prop}[thm]{Proposition}
\newtheorem{rem}[thm]{Remark}
\numberwithin{equation}{section}
\newcommand\myshade{85}
\colorlet{mylinkcolor}{red}
\colorlet{mycitecolor}{blue}
\newcommand{\la}[1]{\mathfrak{#1}}
\newcommand{\ZZ}{\mathbb{Z}}
\newcommand{\CC}{\mathbb{C}}
\newcommand{\cW}{\mathcal{W}}
\newcommand{\sW}{\mathscr{W}}
\newcommand{\sL}{\mathscr{L}}
\newcommand{\fS}{\mathfrak{S}}
\DeclareMathOperator{\ch}{\chi}
\newcommand{\weylvec}{\delta}
\begin{document}

\date{}

\title[log-VOAs and torus links]{Characters of logarithmic vertex operator algebras and coloured invariants of torus links}
\author{Shashank Kanade}
\address{Department of Mathematics, University of Denver, Denver, CO 80208}
\email{shashank.kanade@du.edu}
\thanks{We acknowledge the support of Simons Collaboration
 Grant for Mathematicians \#636937. 
 We also thank T.\ Creutzig and K.\ Hikami for their helpful comments.
}
\subjclass{17B69, 57K14}

\begin{abstract} 
	We show that the characters of $\mathfrak{sl}_r$ versions of the $(1,p)$ singlet and the $(1,p)$ triplet VOAs arise as limits of appropriately coloured $\mathfrak{sl}_r$ Jones invariants of certain torus links.
\end{abstract}

\dedicatory{
	Dedicated to the loving memory of Aditya Kanade (1996 -- 2023)
}

\maketitle

\section{Introduction}

There exist fascinating connections between various quantum invariants of knots and $3$-manifolds on the one hand and characters of vertex operator algebras (VOAs) on the other.

Most pertinent for our purposes is the work of Morton \cite{Mor-coloured}, where he gave an explicit formula for the coloured $\la{sl}_2$ Jones invariants of the torus knots $T(p,p')$ (where $p,p'$ are coprime), essentially by employing the Rosso--Jones formula \cite{RosJon-torus}. Readers familiar with the representation theory of the Virasoro minimal model VOA will instantly recognize that the large colour limit of Morton's formula given in \cite[Sec.\ 3]{Mor-coloured} is the $q$-character of the simple Virasoro VOA at parameters $p,p'$ (up to some factors and normalizations that do not depend on the the knot.)

Limits of coloured invariants of knots and links have been studied in several works.
We refer the readers to some important works on the existence and explicit computations of limits of $\la{g}$ (where $\la{g}$ is a finite-dimensional simple Lie algebra not necessarily $\la{sl}_2$) coloured invariants of various links -- see \cite{ArmDas-RR}, \cite{BeiOsb}, \cite{GarLe-Nahm},  \cite{GarVuo}, \cite{Haj}, \cite{KeiOsb},  \cite{Yua-q}, \cite{Yua-stability}, \cite{Yua-torus}, etc.

The Virasoro VOA that we have referenced above is nothing but the principal $\sW$-algebra related to the affine VOA based on $\widehat{\la{sl}_2}$ at certain admissible levels. It is now natural to ask if characters of other VOAs may also similarly arise as appropriate limits of quantum invariants of knots and links. Indeed, in a previous article \cite{Kan-torus}, we showed that the characters of the principal $\sW$-algebras based on $\la{sl}_r$ for all $r\geq 2$ arise as limits of $\la{sl}_r$ invariants of torus knots.
These particular VOAs enjoy very special properties. Due to deep results of Arakawa \cite{Ara-c2} and \cite{Ara-princrat}, these are $C_2$-cofinite and rational, and thus their characters are essentially modular invariant.

The present paper is devoted to the study of coloured $\la{sl}_r$ invariants of torus links $T(c,cp)$. Our main theorem asserts that appropriate limits of these invariants give rise to characters of \emph{non-rational} VOAs, namely the $\la{sl}_r$ analogues of the $\la{sl}_2$ $(1,p)$-singlet and the $(1,p)$-triplet VOAs.
The $\la{sl}_2$ versions of the singlet and triplet VOAs are designated as such because of the presence of one (= dimension of Cartan subalgebra in $\la{sl}_2$) distinguished field in the former case and three (= dimension of $\la{sl}_2$) fields in the latter. Thus, the terminology ``singlet'' and ``triplet'' is not exactly appropriate in the higher rank case, however, we keep these names for the lack of better ones.

We consider torus links $T(c,cp)$ and colour each of the $c$ components with the same ``one-rowed'' partition --- $(n)$.  This corresponds to attaching the irreducible module $L_r(n\Lambda_1)$
of $\la{sl}_r$ to each component. 
Recall that $L_r(n\Lambda_1)$ is just the $n$th symmetric power of the defining representation of $\la{sl}_r$.
Then, informally speaking, our main theorems presented in Section \ref{sec:mainthm} state that:
\begin{enumerate}
	\item If $c\leq r$, the $n\rightarrow\infty$ limits of these coloured invariants (after an appropriate shift) are the characters of the $(1,p)$ singlet based on $\la{sl}_c$ (up to some factors depending on $c$ and $r$, but independent of $p$).
	\item If $c= r+1$, the $n\rightarrow\infty$ limits of these coloured invariants (after an appropriate shift) are the characters of the $(1,p)$ triplet VOA based on $\la{sl}_r$, and a family of its distinguished irreducible modules (up to some factors depending on $r$, but independent of $p$). Here, the limits need to be taken with $n$ proceeding along arithmetic progressions of period $r$. 
\end{enumerate}
Our proofs use the formula for these invariants given by Lin and Zheng in \cite{LinZhe}, and further use combinatorial results to match the limits with appropriate VOAs.

Unlike the case of principal $\sW$ algebras that make appearance in relation to the knots $T(p',p)$ \cite{Kan-torus}, the characters of the singlet and triplet VOAs are either not modular invariant (in the former case) or some modifications are necessary (for the latter case). Their analytic properties have been much studied recently, see for instance \cite{CreMil}, \cite{BriMil-singlet}, \cite{BriMil-II}, \cite{BriKasMil}, etc. The structure and representation theory of $\la{sl}_2$ $(1,p)$ singlet and triplet VOAs is by now fairly well-understood \cite{AdaMil-singlet}, \cite{AdaMil-triplet}, \cite{CreMcRYan-ribbonsinglet}, \cite{CreMcRYan-ribbonfullsinglet}. However, results for higher rank versions (and their $(p,p')$ generalizations  \cite{FeiTip}) are few and difficult. For some recent and important progress, see \cite{Sug-feigintipunin}, \cite{Sug-highertriplet}.
We started by asserting that there are fascinating connections between quantum invariants and VOAs. Indeed, the singlet and triplet VOAs have made an appearance in relation to the $\widehat{Z}$ invariants of certain $3$-manifolds in the work \cite{CheEtAl} and \cite{Park}. 
Recently, the torus links $T(2s,2t)$ have been studied in \cite{HikSug} where it is shown that their $\la{sl}_2$ coloured invariants give characters of the logarithmic $(s,t)$-generalizations of the  $\la{sl}_2$ $(1,p)$ singlet.
In conclusion, all of these VOAs are decidedly at the forefront of the present research.

Our primary motivation for the present article is two-fold.
Firstly, it is quite unclear exactly which VOA characters may appear via coloured invariants of links. We believe that building a library of examples will be helpful in understanding this question. 

Secondly, our primary interest is in \emph{using} this bridge to gain more insight on combinatorial properties of the characters. Especially, finding and proving new fermionic formulas for characters of various VOAs has been a very active line of research in the last few decades. Such formulas for various logarithmic VOAs are studied knot-theoretically in \cite{CheEtAl}.
For rational and principal $\sW$ algebras, not much is known in general beyond Virasoro $(p,p')$ and the $\sW_3(3,p')$ models. For Virasoro, see \cite{Welsh}. Even a complete understanding of fermionic formulas for $\sW_3(3,p')$-modules is suprisingly \emph{very} recent; see for instance \cite{KanRus-cylindric} and \cite{War-KRproof}, etc. We hope that knot-theoretic methods are helpful in these investigations.

\section{Schur polynomials and coloured HOMFLY invariants}
\subsection{Partitions} 
A sequence $\mu=(\mu_1,\mu_2,\cdots)$ of non-negative integers with finitely many non-zero entries is called a composition. The length of $\mu$ is denoted by $\ell(\mu)$ and is the location of the right-most non-zero entry. We say that the weight of $\mu$ is $n$ (denoted either by $\mu\vDash n$ or $|\mu|=n$) if $\sum_{i}\mu_i = n$.

We say that $\mu$ is a partition if its entries satisfy $\mu_1\geq \mu_2\geq \mu_3\cdots$. In this case, we say that $\mu\vdash n$ if the weight of $\mu$ is $n$.

We may choose to drop the trailing zero entries from a composition or a partition.
The partition $(n,n,\cdots,n)$ with $c$ occurrences of $n$ will be denoted as $(n)^c$. 

Given a partition $\mu$, its Young diagram consists of left-justified rows of boxes, with row numbers increasing from top to bottom, and with row $i$ having $\mu_i$ many boxes.

A tableau of shape $\mu$ is a filling of the Young diagram of $\mu$ with integers.

A semi-standard Young tableau of shape $\mu$ (SSYT for short) is a tableau of shape $\mu$ where the entries weakly increase across rows but strictly increase down the columns.

Given a partition $\lambda$ and a composition $\mu$, the Kostka number $K_{\lambda,\mu}$ is the number of SSYT of shape $\lambda$ such that the number of occurrences of $i$ equals $\mu_i$ for all $i\in \ZZ_{\geq 1}$.
Clearly, $K_{\lambda,\mu}=0$ if $|\lambda|\neq |\mu|$.

\subsection{Lie algebras $\la{gl}_r$ and $\la{sl}_r$} A partition $\lambda$ with $\ell(\lambda)\leq r$ gives rise to an irreducible finite-dimensional module $\Gamma_r(\lambda)$ for the reductive Lie algebra $\la{gl}_r$. Restricted to the simple Lie algebra $\la{sl}_r$, $\Gamma_r(\lambda)$ remains irreducible, and we will denote the $\la{sl}_r$ module by $L_r(\lambda)$.

Given two partitions $\lambda$, $\lambda'$ with $\ell(\lambda),\ell(\lambda')\leq r$, we have $\Gamma_r(\lambda)\cong \Gamma_r(\lambda')$ iff $\lambda=\lambda'$. However, $L_r(\lambda)\cong L_r(\lambda')$ iff $\lambda_i-\lambda'_i=\lambda_j-\lambda'_j$ for all $1\leq i\leq j\leq r$. In other words, the Young diagrams of $\lambda$ and $\lambda'$ differ by zero more columns of height $r$ on the left.

We choose the Cartan subalgebras of $\la{gl}_r$ and $\la{sl}_r$ to be the subalgebras of diagonal matrices. Let $E_{ij}$ denote the matrix with $1$ in row $i$, column $j$, and $0$s everywhere else.
For $\lambda$ being a partition with at most $r$ parts, $\Gamma_r(\lambda)$ is a direct sum of simultaneous eigenspaces under the action of the Cartan subalgebra. 
The character of $\Gamma_r(\lambda)$  is the generating function for the dimensions of these eigenspaces and it is given by the Schur polynomials:
\begin{align*}
	\ch(\Gamma_r(\lambda))=s_\lambda(x_1,\cdots,x_r)
	=\sum_{\mu\vDash |\lambda|,\ell(\mu)\leq r} K_{\lambda,\mu} x_1^{\mu_1}x_2^{\mu_2}\cdots x_r^{\mu_r},
\end{align*}
where the formal variable $x_i$ keeps track of the eigenvalue with respect to $E_{ii}$.
The character of $L_r(\lambda)$ is just the image of this character in $$\CC[x_1,\cdots,x_r]/\langle x_1x_2\cdots x_r - 1\rangle.$$

Continuing to let $\lambda$ be a partition with $\ell(\lambda)\leq r$, we have
\begin{align}
	\dim(\Gamma_r(\lambda))&=\dim(L_r(\lambda))\\
	&=\sum_{\mu\vDash |\lambda|,\ell(\mu)\leq r} K_{\lambda,\mu}\\
	&= \text{Number\,\,of\,\, SSYT\,\,of\,\,shape}\,\,\lambda\,\,
	\text{with\,\,entries\,\,in\,\,}
	\{1,2,\cdots,r\}.
	\label{eqn:dim}
\end{align}	
Secondly, the weight space of Cartan weight $0$ in $L_r(\lambda)$ is the same as the subspace of $\Gamma_r(\lambda)$ on which all $E_{ii}$ have the same eigenvalue. Thus, we have:
\begin{align}
	\dim&(L_r(\lambda)_0)
	= \text{Number\,\,of\,\, SSYT\,\,of\,\,shape}\,\,\lambda\,\,
	\text{with\,\,entries\,\,in\,\,}
	\{1,2,\cdots,r\}
	\notag\\
	& \text{\,\,where\,\,the\,\,frequencies\,\,of\,\,all\,\,the\,\,entries\,\,}
	{1,\cdots,r}
	\text{\,\,is\,\,same}.
	\label{eqn:dim0}
\end{align}	

We have the following product form whenever $\ell(\lambda)\leq r$:
\begin{align*}
	s_\lambda(q^{r-1},q^{r-3},\cdots,q^{1-r})=
	\prod_{1\leq i< j\leq r}\frac{q^{(\lambda_i-\lambda_j+j-i)}-q^{-(\lambda_i-\lambda_j+j-i)}}{q^{(j-i)}-q^{-(j-i)}}.
\end{align*}
Note that if $\lambda$, $\lambda'$ are partitions such that $\ell(\lambda),\ell(\lambda')\leq r$ and $L_r(\lambda)\cong L_r(\lambda')$, then, $\lambda_i-\lambda_j=\lambda_i'-\lambda_j'$ for all $1\leq i,j\leq r$ and therefore,
\begin{align}
	s_\lambda(q^{r-1},q^{r-3},\cdots,q^{1-r})=
	s_{\lambda'}(q^{r-1},q^{r-3},\cdots,q^{1-r}).
	\label{eqn:princspec}
\end{align}

\subsection{Tensor multiplicities} We recall some basic facts about Schur polynomials.
We have:
\begin{align*}
	s_{(n)}(x_1,\cdots,x_r) = H_n(x_1,\cdots,x_r) = 
	\sum_{r\geq i_1\geq i_2\cdots \geq i_n\geq 1} x_{i_1}x_{i_2}\cdots x_{i_n},
\end{align*}
which is the complete homogeneous symmetric polynomial in variables $x_1,\cdots, x_r$ of total degree $n$.
We further have (see \cite{FulHar})
\begin{align}
	s_{(n)}(x_1,\cdots,x_r)^m = 
	\sum_{\lambda\vdash nm, \ell(\lambda)\leq r}K_{\lambda, (n)^m}s_{\lambda}(x_1,\cdots,x_r).
	\label{eqn:tens_schur}
\end{align}
This relation will play a prominent role in what follows.

Importantly, \eqref{eqn:tens_schur} translates to:
\begin{align}
	\Gamma_r((n))^{\otimes m}&=
		\sum_{\lambda: \lambda\vdash nm, \ell(\lambda)\leq r}K_{\lambda, (n)^m}\Gamma_r(\lambda),	\label{eqn:tens_gl}\\
	L_r((n))^{\otimes m}&=
		\sum_{\lambda: \lambda\vdash nm, \ell(\lambda)\leq r}K_{\lambda, (n)^m}L_r(\lambda).
		\label{eqn:tens_sl}
\end{align}
The second relation is clearly obtained from the first -- 
All partitions appearing in the first sum have the same weight, thus, 
for $\lambda_1$ and $\lambda_2$ appearing in the first sum, $L_r(\lambda_1)\cong L_r(\lambda_2)$ iff $\lambda_1=\lambda_2$. In other words, different summands in \eqref{eqn:tens_gl} restrict to inequivalent $\la{sl}_r$ modules.

\subsection{Coloured invariants of torus links} We now recall from \cite{LinZhe} the formula for coloured HOMFLY invariants of the torus links.

As in \cite{LinZhe}, we define the torus link $T(a,b)$ as the closure of the braid group element $(\sigma_1\sigma_2\cdots\sigma_{a-1})^b$ with $a$ strands. The number of components of $T(a,b)$ equals the gcd of $a$ and $b$. 

Let us consider the link $\sL=T(p'c,pc)$ where $(p',p)=1$. Colour the $c$ components with partitions $\lambda_1,\cdots,\lambda_c$. Suppose that 
for all $1\leq i\leq r$, $\ell(\lambda_i)\leq r$,
and $\lambda_i \vdash k_i$.
Also let $k_1+\cdots+k_c=k$.
Given this data, define the integers $m^{\lambda}_{p';\lambda_1,\cdots,\lambda_c}$ (where $\lambda$ is a partition) by:
\begin{align}
	\prod_{i=1}^c s_{\lambda_i}(x_1^{p'},\cdots,x_r^{p'})=
	\sum_{\lambda\vdash p'k}
	m^{\lambda}_{p';\lambda_1,\cdots,\lambda_c}s_\lambda(x_1,\cdots,x_r)\label{eqn:mlp}.
\end{align}

Given any partition $\lambda$, we need the statistic 
\begin{align*}
	\kappa_\lambda = 2\sum_{i=1}^{\ell(\lambda)}\sum_{j=1}^{\lambda_i} (j-i).
\end{align*}
Note that there is no harm in having the outer summation range over all $i\geq 1$.
If further $\ell(\lambda)\leq r$, define $a_i=\lambda_i-\lambda_{i+1}$ for $1\leq i\leq r$ (in particular, $a_r=\lambda_r$). 
Then, it is not hard to deduce that
\begin{align}
	\kappa_{\lambda}=\sum_{1\leq i\leq j\leq r}\min(i,j)\,a_ia_j - \sum_{1\leq i\leq r}i^2a_i.
	\label{eqn:kappaa}
\end{align}

Up to omission of some insignificant factors, the coloured HOMFLY invariant of $\sL$ is denoted by 
$\cW_{\sL;\, \lambda_1,\cdots,\lambda_c}(t,\nu)$.
This invariant is un-normalized -- for unknot, after specializing $t,\nu$ appropriately, it gives the quantum dimension of the underlying $\la{sl}_r$ modules; see \cite[(2.12), (4.23)]{LinZhe}.
This invariant is given as in the following theorem of \cite{LinZhe}.

\begin{thm}
	Let $\sL$ be the torus link $T(p'c,pc)$ with $(p',p)=1$. Then, we have:
	\begin{align}
		W_{\sL;\, \lambda_1,\cdots,\lambda_c}(t,\nu)
		=t^{\frac{1}{2}pp'\sum_{i}{\kappa_{\lambda_i}}}
		\nu^{\frac{1}{2}p(p'-1)n}
		\sum_{\lambda\vdash p'k} m^{\lambda}_{p';\lambda_1,\cdots,\lambda_c}
		\cdot
		t^{-\frac{p}{2p'}\kappa_{\lambda}}
		\cdot s^{\ast}_\lambda(t,\nu),
		\label{eqn:Winv}
	\end{align}
	where $s^\ast_{\lambda}(t,\nu)$ satisfies (see \cite[(4.25), (5.14)]{LinZhe}):
	\begin{align*}
		s^{\ast}_\lambda(t,\nu)\vert_{t^{-1/2}\mapsto q, \nu^{-1/2}\mapsto q^{r}}=s_{\lambda}(q^{r-1},q^{r-3},\cdots,q^{1-r}).
	\end{align*}
\end{thm}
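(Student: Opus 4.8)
The plan is to derive \eqref{eqn:Winv} from the Reshetikhin--Turaev construction of the coloured HOMFLY invariant together with the Rosso--Jones cabling formula, organized through the HOMFLY skein of the annulus. First I would present $\sL=T(p'c,pc)$ as a \emph{satellite}: since $(p',p)=1$, the $c$ components are $c$ parallel copies of the $(p',p)$ torus curve, so $\sL$ is the image of the standard torus embedding $\phi_{p',p}$ of the annulus into $S^3$ applied to the pattern consisting of $c$ parallel cores coloured by $\lambda_1,\dots,\lambda_c$. Working in the symmetric-function model of the skein of the annulus, in which the closure of the idempotent coloured by $\lambda$ corresponds to the Schur function $s_\lambda$ and stacking of parallel cores corresponds to multiplication, the pattern corresponds to the product $\prod_{i=1}^c s_{\lambda_i}$. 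The whole computation then reduces to understanding how $\phi_{p',p}$ acts on this element and how the resulting eigenvectors are evaluated by the bracket.

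The key representation-theoretic input is that $\phi_{p',p}$ decomposes into two operations on the skein. The longitudinal wrapping number $p'$ induces the $p'$-th \emph{Adams operation} $\psi^{p'}$, which on symmetric functions is the substitution $x_j\mapsto x_j^{p'}$; since $\psi^{p'}$ is a ring homomorphism it distributes over the product, giving $\prod_i s_{\lambda_i}(x_1^{p'},\dots,x_r^{p'})$, and expanding in the Schur basis produces precisely the integers $m^{\lambda}_{p';\lambda_1,\dots,\lambda_c}$ of \eqref{eqn:mlp}, with $\lambda\vdash p'k$. The transverse (meridional) wrapping number $p$ contributes a framing that is diagonal in the Schur eigenbasis: each $s_\lambda$ is an eigenvector of the full twist with eigenvalue the ribbon scalar $\theta_\lambda$, and the $(p',p)$ cabling raises this scalar to the power $p/p'$. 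Thus each summand acquires a factor $\theta_\lambda^{p/p'}$ and is then evaluated by the bracket of the coloured core, which is exactly the un-normalized quantum dimension $s^\ast_\lambda(t,\nu)$.

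It remains to compute the ribbon eigenvalue and to collect the global framing and writhe corrections. I would compute $\theta_\lambda$ from the quadratic Casimir: its $\la{sl}_r$-part equals $t^{-\kappa_\lambda/2}$, where the identity \eqref{eqn:kappaa} matches the Casimir eigenvalue with the statistic $\kappa_\lambda$, while its determinant (central) part is a power of $\nu$ proportional to $|\lambda|$. Raising to the power $p/p'$ reproduces the summand factor $t^{-\frac{p}{2p'}\kappa_\lambda}$ together with a uniform power of $\nu$. Separately, passing from the braid-closure framing to the normalization of \cite{LinZhe} costs one twist factor per \emph{original} component coloured $\lambda_i$, with framing exponent equal to the self-linking $pp'$ of the torus curve; this yields the prefactor $t^{\frac{1}{2}pp'\sum_i\kappa_{\lambda_i}}$ and the remaining $\nu$-power $\nu^{\frac{1}{2}p(p'-1)n}$. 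Finally, the asserted specialization of $s^\ast_\lambda$ is the statement that the coloured invariant of the zero-framed unknot is the quantum dimension; under $t^{-1/2}\mapsto q$ and $\nu^{-1/2}\mapsto q^{r}$ it becomes the principally specialized Schur polynomial $s_\lambda(q^{r-1},\dots,q^{1-r})$, whose product form is already recorded in the excerpt and follows from the Weyl dimension formula.

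The main obstacle is the bookkeeping of framing and normalization: correctly separating the $\la{sl}_r$ twist (the $t^{\kappa}$ factors) from the $\la{gl}_r$ determinant twist (the $\nu$ factors), fixing all signs and half-integer exponents, and verifying that the writhe and self-linking contributions assemble exactly into the stated prefactors $t^{\frac{1}{2}pp'\sum_i\kappa_{\lambda_i}}$ and $\nu^{\frac{1}{2}p(p'-1)n}$ rather than leaking into the individual summands. The conceptual steps---the satellite presentation, the Adams operation producing $m^{\lambda}$, and the diagonalization of the full twist---are clean; it is the framing and ribbon anomaly constants that require the most care, and for those I would follow the explicit skein computations of \cite{LinZhe} line by line.
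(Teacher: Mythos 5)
This statement is not proved in the paper at all: it is quoted verbatim as a theorem of Lin and Zheng, with the paper simply citing \cite{LinZhe} for both the formula \eqref{eqn:Winv} and the specialization of $s^\ast_\lambda$. So the only meaningful comparison is with the proof in \cite{LinZhe} itself, which proceeds through the Hecke algebras $H_n$, the Markov trace, and the minimal idempotents, diagonalizing the full twist on isotypic components. Your satellite/skein-of-the-annulus outline --- presenting $T(p'c,pc)$ as the $c$-parallel pattern decorating the $(p',p)$ torus embedding, letting the Adams operation $\psi^{p'}$ produce the coefficients $m^{\lambda}_{p';\lambda_1,\dots,\lambda_c}$ of \eqref{eqn:mlp}, and reading off the fractional twist eigenvalue $t^{-\frac{p}{2p'}\kappa_\lambda}$ from the ribbon element --- is the standard Rosso--Jones/Morton route and is equivalent in substance to the Hecke-algebra computation of \cite{LinZhe}; the conceptual steps are all correct. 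The one caveat is that your sketch openly defers the decisive bookkeeping (the separation of the $t^{\kappa}$ twist from the $\nu$-powers, and the framing prefactors $t^{\frac{1}{2}pp'\sum_i\kappa_{\lambda_i}}$, $\nu^{\frac{1}{2}p(p'-1)n}$) to ``following \cite{LinZhe} line by line,'' so as a self-contained argument it is an outline rather than a proof --- but since the paper itself imports the result without proof, that is exactly the status the statement has in the text.
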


Very importantly, we will ignore the overall factor of $t,\nu$ on the outside of \eqref{eqn:Winv}, and work with the specialization 
\begin{align}
	t^{-1/2}\mapsto q^{1/2},\quad \nu^{-1/2}\mapsto q^{r/2}.
	\label{eqn:spec}
\end{align}
We shall denote the resulting invariant by $J_r$.
Thus, we have:
	\begin{align}
	J_r(\sL;\, \lambda_1,\cdots,\lambda_c)
	=
	\sum_{\lambda\vdash p'k} m^{\lambda}_{p';\lambda_1,\cdots,\lambda_c}
	\cdot
	q^{\frac{p}{2p'}\kappa_{\lambda}}
	\cdot s_\lambda(q^{\frac{r-1}{2}},q^{\frac{r-3}{2}},\cdots,q^{\frac{1-r}{2}}).
	\label{eqn:Jinv}
\end{align}
We will exclusively work with the case when $\lambda_1=\cdots=\lambda_c=(n)$ and $p'=1$.
In this case, we shall abbreviate the notation as 	$J_r(c,pc;n)$. 
Using \eqref{eqn:tens_schur} and \eqref{eqn:mlp} we have:
\begin{align}
	J_r(c,pc;n)=	\sum_{\substack{\lambda\vdash nc\\ \ell(\lambda)\leq r}} K_{\lambda, (n)^c}
	\cdot
	q^{\frac{p}{2}\kappa_{\lambda}}
	\cdot s_\lambda(q^{\frac{r-1}{2}},q^{\frac{r-3}{2}},\cdots,q^{\frac{1-r}{2}}).
	\label{eqn:ourjones}
\end{align}
Note that due to the SSYT condition, $K_{\lambda,(n)^c}=0$ if $\ell(\lambda)>c$. Thus, the sum is actually over partitions $\lambda$ such that $\lambda\vdash nc$ and $\ell(\lambda)\leq \min(r,c)$.

\section{Logarithmic VOAs}

In this section, we recall relevant information about the $(1,p)$ singlet and triplet VOAs based on the $\la{sl}_r$ root lattices ($r,p\in\ZZ_{\geq 2}$). 
These algebras are denoted as $W^0(p)_{Q_r}$ and $W(p)_{Q_r}$, respectively.
Our main references are
\cite{BriMil-singlet}, \cite{BriMil-II}, and \cite{CreMil}.

\subsection{More notation about $\la{sl}_r$} We first review a bit more notation about the Lie algebra $\la{sl}_r$.

The simple roots will be denoted by $\alpha_i$, $1\leq i\leq r-1$. $Q_r$ will be the root lattice.
The set of positive roots is:
\begin{align*}
	\Phi_r^+=\{ \alpha_i+\alpha_{i+1}+\cdots+\alpha_j\,|\, 1\leq i\leq j\leq r-1\}.
\end{align*}
The fundamental roots will be denoted by $\Lambda_i$, $1\leq i\leq r-1$. $P_r$ will denote the weight lattice and $P_r^+$ the set of dominant integral weights.
$\weylvec$ will be the Weyl vector. 
Note that $P_r/Q_r$ is a cyclic group of order $r$, generated by $\Lambda_1$. Note that we have 
\begin{align}
\Lambda_i - i\Lambda_1\in Q_r,\quad 1\leq i\leq r-1,
\label{eqn:coset}
\end{align}
therefore, sometimes the set of representatives of $P_r/Q_r$ is taken to be $\{ 0, \Lambda_1,\cdots,\Lambda_r\}$ 
(for instance, \cite{BriMil-II}).

$\fS_r$ will denote the Weyl group, which is isomorphic to the symmetric group on $r$ letters. It has a length function $\ell(\cdot)$ such that the simple reflections have length $1$. This length function is not to be confused with the length of compositions.
The usual sign representation of $\fS_r$ is then 
just $(-1)^\ell$.
The (unique) longest element with respect to $\ell$ will be denoted by $w_0$ and satisfies $\ell(w_0)=|\Phi_r^+|$, $w_0(\weylvec)=-\weylvec$.

The lattice $P_r$ has a bilinear, symmetric, $\fS_r$-invariant, non-degenerate form $(\cdot, \cdot)$. Recall that for all $1\leq i\leq j\leq r-1$ we have:
\begin{align}
(\Lambda_i,\Lambda_j) &= \min(i,j)-\frac{ij}{r},
\label{eqn:LambaiLambdaj}\\	
(\weylvec,\alpha_i) &= 1,\\
(\weylvec,\Lambda_i) &= \dfrac{ri-i^2}{2}.
\label{eqn:deltaLambdai}
\end{align}
As usual, $\|\lambda\|^2$ will denote $(\lambda,\lambda)$ whenever $\lambda\in P_r$.

We have seen how partitions of length at most $r$ give rise to irreducible modules $\Gamma_r$ and $L_r$. Actually, the finite-dimensional irreducible modules of $\la{sl}_r$ are indexed by elements of $P_r^+$. Let us therefore review how to translate between the partitions of length at most $r$ and elements of $P_r^+$.

A partition $\mu$ such that $\ell(\mu)\leq r$ corresponds to the element 
\begin{align*}
\underline{\mu}_r=(\mu_1-\mu_2)\Lambda_1+\cdots+(\mu_{r-1}-\mu_{r})\Lambda_{r-1}\in P_r^+.
\end{align*}
Further, using \eqref{eqn:coset} it is not hard to see that
\begin{align*}
	\underline{\mu}_r\in (Q_r+|\mu|\Lambda_1)\cap P_r^+.
\end{align*}
Conversely, a weight $\lambda=a_1\Lambda_1+\cdots+a_{r-1}\Lambda_{r-1}\in P_r^+$ arises from the partition
\begin{align}
	\widehat{\lambda}
	=(a_1+\cdots+a_{r},\,\,a_2+\cdots+a_{r},\,\,\cdots,\,\,a_{r-1}+a_r,\,\,a_r,\,\,0,\,\,\cdots),
	\label{eqn:lambdahat}
\end{align}
where $a_r\in\ZZ_{\geq0}$ can be chosen arbitrarily.
Crucially, using \eqref{eqn:LambaiLambdaj}, \eqref{eqn:deltaLambdai}, and \eqref{eqn:kappaa} we have:
\begin{align}
	(\lambda&,\lambda+2\weylvec)= \sum_{1\leq i\leq j\leq r-1}\left(a_ia_j\min(i,j)-a_ia_j\frac{ij}{r}\right)
	+r\sum_{1\leq i\leq r-1}a_ii - \sum_{1\leq i\leq r-1}i^2a_i\notag\\
	&=\sum_{1\leq i\leq j\leq r}\left(a_ia_j\min(i,j)-a_ia_j\frac{ij}{r}\right)
	+r\sum_{1\leq i\leq r}a_ii - \sum_{1\leq i\leq r}i^2a_i\notag\\
	&=\kappa_{\widehat{\lambda}}+r|\widehat{\lambda}|-\frac{1}{r}|\widehat{\lambda}|^2.
	\label{eqn:thetakappa}
\end{align}
Note that this quantity is independent of $a_r$, since $a_r$ was chosen arbitrarily.

Given the above correspondence between $P_r^+$ and partitions, we have the following formula for Schur polynomials.
Let $\lambda\in P_r^+$ and let $\mu$ be any partition such that $\ell(\mu)\leq r$ and $\underline{\mu}_r=\lambda$. Then,
\begin{align*}
	s_{\mu}&(q^{\frac{r-1}{2}},q^{\frac{r-3}{2}},\cdots,q^{\frac{1-r}{2}})
	=\dfrac{\sum_{w\in\fS_r}(-1)^{\ell(w)} q^{ (w(\lambda + \weylvec),\weylvec) }}
	{\sum_{w\in\fS_r} (-1)^{\ell(w)}q^{ (w\weylvec,\weylvec) }}.
\end{align*}
The fact that the left-hand side is independent of the choice of $\mu$ 
(subject to $\ell(\mu)\leq r$ and $\underline{\mu}_r=\lambda$) is merely a restatement of \eqref{eqn:princspec}. In light of this, 
we may unambiguously define
\begin{align*}
	s_{\lambda}&(q^{\frac{r-1}{2}},q^{\frac{r-3}{2}},\cdots,q^{\frac{1-r}{2}})
\end{align*}
where $\lambda\in P_r^+$ rather than a partition of length at most $r$.

From the Weyl denominator formula, we have:
\begin{align}
	&{\sum_{w\in\fS_r} (-1)^{\ell(w)}q^{ (w\weylvec,\weylvec) }}
	=\prod_{\alpha\in \Phi_r^+}
	(q^{\frac{1}{2}(\weylvec,\alpha)}-q^{-\frac{1}{2}(\weylvec,\alpha)}),\notag\\
	&=(-1)^{|\Phi_r^+|}q^{-\|\weylvec\|^2}\prod_{\alpha\in \Phi_r^+}
	(1-q^{(\weylvec,\alpha)})
	=(-1)^{|\Phi_r^+|}q^{-\|\weylvec\|^2}
	\prod_{1\leq i<j\leq r}
	(1-q^{j-i}),
	\label{eqn:Deltar}
\end{align}
where the first equality follows from the Weyl denominator formula, 
the second by the definition of $\weylvec$, and last by some elementary properties of $\Phi_r^+$.
We are now ready to recall the formulas for various logarithmic VOAs
from \cite{CreMil}, \cite{BriMil-singlet} and \cite{BriMil-II}.

\subsection{$(1,p)$ Singlet}
Let $\eta(q)=q^{1/24}(q)_\infty$. Then, for the $\la{sl}_r$ analogue of the $(1,p)$ singlet VOA,
\begin{align}
	&\eta(q)^{r-1}\ch(W^0(p)_{Q_r})=
	\sum_{\lambda\in Q_r\cap P_r^+}\dim(L_r(\lambda)_0)
	\left(\sum_{w\in \fS_r}(-1)^{\ell(w)}
	q^{\frac{1}{2} \| \sqrt{p} w(\lambda+\weylvec) - \frac{1}{\sqrt{p}}\weylvec \|^2}\right)
	\notag\\
	&=q^{\left(\frac{p}{2}+\frac{1}{2p}\right)\|\weylvec\|^2}\sum_{\lambda\in Q_r\cap P_r^+}\dim(L_r(\lambda)_0)
	q^{\frac{p}{2}(\lambda,\lambda+2\delta)}
	\left(\sum_{w\in\fS_r} (-1)^{\ell(w)} q^{-(w(\lambda+\weylvec),\weylvec)}\right)
	\notag\\
	&=
	q^{\left(\frac{p}{2}+\frac{1}{2p}\right)\|\weylvec\|^2}\sum_{\lambda\in Q_r\cap P_r^+}\dim(L_r(\lambda)_0)
	q^{\frac{p}{2}(\lambda,\lambda+2\delta)}
	\left(\sum_{w\in\fS_r} (-1)^{\ell(w_0^{-1}w)} q^{-(w(\lambda+\weylvec),w_0\weylvec)}\right)
	\notag\\
	&=
	(-1)^{\ell(w_0)}q^{\left(\frac{p}{2}+\frac{1}{2p}\right)
		\|\weylvec\|^2}\sum_{\lambda\in Q_r\cap P_r^+}\dim(L_r(\lambda)_0)
	q^{\frac{p}{2}(\lambda,\lambda+2\delta)}
	\left(\sum_{w\in\fS_r} (-1)^{\ell(w)} q^{(w(\lambda+\weylvec),\weylvec)}\right)
	\notag\\
	&=
	(-1)^{\ell(w_0)}q^{\left(\frac{p}{2}+\frac{1}{2p}\right)
		\|\weylvec\|^2}
	\Delta_r
	\sum_{\lambda\in Q_r\cap P_r^+}\dim(L_r(\lambda)_0)
	q^{\frac{p}{2}(\lambda,\lambda+2\delta)}
	s_{{\lambda}}(q^{\frac{r-1}{2}},q^{\frac{r-3}{2}}\cdots,q^{\frac{1-r}{2}})
	\notag\\
	&=q^{\frac{1}{2p}\|(p-1)\weylvec\|^2}
	\prod_{1\leq i<j \leq r}(1-q^{j-i})\notag\\
	&\quad\cdot
	\sum_{\lambda\in Q_r\cap P_r^+}\dim(L_r(\lambda)_0)
	\cdot
	q^{\frac{p}{2}(\lambda,\lambda+2\delta)}
	\cdot
	s_{{\lambda}}(q^{\frac{r-1}{2}},q^{\frac{r-3}{2}}\cdots,q^{\frac{1-r}{2}}).
\end{align}
We will normalize this character, so that:
\begin{align}
	&\overline{\ch}(W^0(p)_{Q_r})\notag\\
	&=
	\frac{	\prod_{1\leq i<j \leq r}(1-q^{j-i})}
	{(q)_\infty^{r-1}}
	\sum_{\lambda\in Q_r\cap P_r^+}\dim(L_r(\lambda)_0)
	\cdot
	q^{\frac{p}{2}(\lambda,\lambda+2\delta)}
	\cdot
	s_{{\lambda}}(q^{\frac{r-1}{2}},q^{\frac{r-3}{2}}\cdots,q^{\frac{1-r}{2}}).
	\label{eqn:singletchar}
\end{align}

\subsection{$(1,p)$ Triplet}
The $\la{sl}_r$ analogue of the $(1,p)$ triplet has several irreducible modules. We shall require a subset of them.

Let $\{i\Lambda_1\,|\,0\leq i\leq r-1\}$  be the fixed set of representatives of $P_r/Q_r$.
Note again that we have eschewed the choice $\{ 0,\Lambda_1,\cdots,\Lambda_{r-1}\}$ of coset representatives from \cite{BriMil-II}; recall \eqref{eqn:coset}.
Corresponding to each $\gamma$ in this set of representatives, the triplet VOA has an irreducible module $W(p,\sqrt{p}\gamma)_{Q_r}$, such that the VOA itself is $W(p,0)_{Q_r}$, typically denoted simply as $W(p)_{Q_r}$. This VOA has many other irreducible modules which we shall not consider here.
Required characters for us are:
\begin{align}
	\eta&(q)^{r-1}\ch(W(p,\sqrt{p}\gamma)_{Q_r})
	\notag\\
	&=
	\sum_{\mu\in (Q_r+{\gamma})\cap P_r^+}\dim(L_r(\mu))
	\left(\sum_{w\in \fS_r}(-1)^{\ell(w)}
	q^{\frac{1}{2} \| \sqrt{p} w(\mu+\weylvec) - \frac{1}{\sqrt{p}}\weylvec \|^2}\right)\notag\\
	&=q^{\frac{1}{2p}\|(p-1)\weylvec\|^2}
	\prod_{1\leq i<j \leq r}(1-q^{j-i})
	\notag\\
	&\quad\cdot
	\sum_{\mu\in (Q_r+{\gamma})\cap P_r^+}\dim(L_r(\mu))
	\cdot q^{\frac{p}{2}(\mu,\mu+2\delta)}
	\cdot s_{{\mu}}(q^{\frac{r-1}{2}},q^{\frac{r-3}{2}}\cdots,q^{\frac{1-r}{2}}).
\end{align}
Again, we shall normalize so that:
\begin{align}
	\overline{\ch}&(W(p,\sqrt{p}\gamma)_{Q_r})\notag\\
	&=
	\frac{	\prod_{1\leq i<j \leq r}(1-q^{j-i})}
	{(q)_\infty^{r-1}}
	\sum_{\mu\in (Q_r+{\gamma})\cap P_r^+}\dim(L_r(\mu))
	q^{\frac{p}{2}(\mu,\mu+2\delta)}
	s_{{\mu}}(q^{\frac{r-1}{2}},q^{\frac{r-3}{2}}\cdots,q^{\frac{1-r}{2}}).
	\label{eqn:tripletchar}
\end{align}

\section{Main combinatorial theorems}

Characters of the VOAs recalled in the previous section involve dimensions of various $\la{sl}_r$ modules, or their weight $0$ subspaces. On the other hand, the formula \eqref{eqn:ourjones} for $\la{sl}_r$ invariants of torus links involves the Kostka numbers. In this section we deduce the key combinatorial theorems that connect the two.

\begin{prop}
	\label{prop:dim0}
	Let $\lambda$ be a partition such that $\ell(\lambda)\leq r$. 
	If $r\nmid |\lambda|$ (equivalently, if $\underline{\lambda}_r\not\in Q_r\cap P_r^+$) then 
	\begin{align*}
		\dim(L_r(\lambda)_0)=0.
	\end{align*}		
	If $\lambda \vdash rk$ (equivalently, $\underline{\lambda}_r\in Q_r\cap P_r^+$) then we have:
	\begin{align*}
		K_{\lambda,(n)^r} = \dim(L_r(\lambda)_0).
	\end{align*}
\end{prop}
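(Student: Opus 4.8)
The plan is to deduce both statements directly from the combinatorial description of the weight-$0$ space recorded in \eqref{eqn:dim0}, which says that $\dim(L_r(\lambda)_0)$ equals the number of SSYT of shape $\lambda$ with entries in $\{1,\dots,r\}$ in which the frequency of each entry $1,\dots,r$ is the same. Every claim in the proposition is obtained by parsing this condition together with the definition of the Kostka number, so essentially no computation is needed.

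First I would settle the two parenthetical equivalences. Since $\underline{\lambda}_r$ is dominant integral by its very construction, $\underline{\lambda}_r\in Q_r\cap P_r^+$ amounts to $\underline{\lambda}_r\in Q_r$. Combining $\underline{\lambda}_r\in(Q_r+|\lambda|\Lambda_1)\cap P_r^+$ with the fact that $P_r/Q_r$ is cyclic of order $r$ generated by $\Lambda_1$ (recall \eqref{eqn:coset}), this happens exactly when $|\lambda|\Lambda_1\in Q_r$, that is, precisely when $r\mid|\lambda|$. This disposes of both equivalences at once.

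For the first assertion, observe that if a single SSYT has all $r$ entries occurring with a common frequency $f$, then it has $rf$ boxes, so $|\lambda|=rf$ and in particular $r\mid|\lambda|$. Contrapositively, when $r\nmid|\lambda|$ no such filling can exist, and hence $\dim(L_r(\lambda)_0)=0$. For the second assertion, assume $|\lambda|=rn$, so that the content $(n)^r$ has the same weight as $\lambda$. Then the common frequency $f$ above is forced to equal $n$, and the SSYT counted by \eqref{eqn:dim0} are exactly those of shape $\lambda$ in which each of $1,\dots,r$ occurs exactly $n$ times, i.e.\ the SSYT of shape $\lambda$ and content $(n)^r$. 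By the definition of the Kostka number this count is $K_{\lambda,(n)^r}$, yielding $K_{\lambda,(n)^r}=\dim(L_r(\lambda)_0)$.

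I do not expect any real obstacle here: the whole statement is a translation through \eqref{eqn:dim0} and the definition of $K_{\lambda,(n)^r}$. The only place requiring a little care is the first step, namely matching the lattice-theoretic condition $\underline{\lambda}_r\in Q_r\cap P_r^+$ with the divisibility $r\mid|\lambda|$ via \eqref{eqn:coset}; once this is in place, the two counting claims are immediate.
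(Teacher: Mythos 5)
Your proof is correct and follows essentially the same route as the paper: the second identity is, as you say, just a direct reading of \eqref{eqn:dim0} together with the definition of the Kostka number. The only (harmless) variation is in the first claim, where the paper invokes the lattice-theoretic fact that the weights of $L_r(\lambda)$ lie in $(Q_r+\lambda)\cap P_r$, whereas you deduce the vanishing combinatorially from \eqref{eqn:dim0} by noting that an SSYT with all $r$ entries of common frequency $f$ has $rf$ boxes; both arguments are immediate and your handling of the parenthetical equivalences via \eqref{eqn:coset} is accurate.
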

\begin{proof}
	The first claim follows immediately from the fact that the set of weights of $L_r(\lambda)$ is a subset of $(Q_r+\lambda) \cap P_r$.
	Second part is just \eqref{eqn:dim0}.	
\end{proof}

\begin{prop}
	\label{prop:r+1dim}
	Let $\lambda\vdash n(r+1)$ be a partition with $\ell(\lambda)\leq r$. 
	If $\lambda_r\geq n$ then we have
	\begin{align*}
			K_{\lambda,(n)^{r+1}}=\dim(L_r(\lambda)).
	\end{align*}
\end{prop}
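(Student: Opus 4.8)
The plan is to prove the identity by exhibiting an explicit bijection between the two sets of tableaux that the two sides count. By \eqref{eqn:dim}, the right-hand side $\dim(L_r(\lambda))=\dim(\Gamma_r(\lambda))$ counts SSYT of shape $\lambda$ with entries in $\{1,\dots,r\}$, while by definition $K_{\lambda,(n)^{r+1}}$ counts SSYT of shape $\lambda$ with entries in $\{1,\dots,r+1\}$ in which each of $1,\dots,r+1$ occurs exactly $n$ times. The hypothesis $\lambda_r\geq n$ (together with $\ell(\lambda)\leq r$, which then forces $\ell(\lambda)=r$) guarantees $\lambda_i\geq n$ for every $1\leq i\leq r$, so the rectangle $(n)^r$ sits inside $\lambda$ occupying its first $n$ columns, each of full height $r$; the complementary cells form the straight shape $\nu=(\lambda_1-n,\dots,\lambda_r-n)$ with $|\nu|=|\lambda|-rn=n$. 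Since $\lambda$ and $\nu$ differ by $n$ columns of height $r$ we have $L_r(\lambda)\cong L_r(\nu)$, so it suffices to match $K_{\lambda,(n)^{r+1}}$ with the number of SSYT of shape $\nu$ having entries in $\{1,\dots,r\}$.

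First I would record the forced features of any $T$ counted by the left-hand side. The value $1$ can occur only in the first row, and as there are exactly $n$ of them with $\lambda_1\geq n$, they fill the first $n$ cells of row $1$; consequently every entry in a column $>n$ is $\geq 2$, so the restriction of $T$ to the columns $>n$, after subtracting $1$ from every entry, is an SSYT $S$ of shape $\nu$ with entries in $\{1,\dots,r\}$. This defines the candidate map $T\mapsto S$, and the content bookkeeping is transparent: writing $c_v$ for the number of $v$'s among the columns $>n$ (so $c_v$ equals the number of $(v-1)$'s in $S$ and $\sum_{v=2}^{r+1}c_v=n$), the rectangle $(n)^r$ must contain $n$ copies of $1$ and $n-c_v$ copies of each $v\geq 2$.

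The heart of the argument is to show this map is a bijection by reconstructing $T$ from $S$. Here I would use the elementary fact that an SSYT of rectangular shape $(n)^r$ with entries in $\{1,\dots,r+1\}$ is determined by its content: each of its $n$ columns omits exactly one value $w_j\in\{1,\dots,r+1\}$, the SSYT condition is equivalent to $w_1\geq\cdots\geq w_n$, and the multiset $\{w_j\}$ is read off directly from the content. Imposing that row $1$ be all $1$'s (i.e.\ $w_j\geq 2$) together with the content computed above pins down the rectangle uniquely, yielding injectivity and a canonical inverse. The one genuine point left to verify --- and the step I expect to be the main obstacle --- is that this reconstructed rectangle glues compatibly with $S+1$ across the seam between columns $n$ and $n+1$, i.e.\ that $R(i,n)\leq S(i,1)+1$ for all relevant $i$. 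I would settle this by the two uniform estimates $R(i,n)\leq i+1$ (the $i$-th smallest among $r$ values chosen from $\{1,\dots,r+1\}$) and $S(i,1)\geq i$ (valid in any SSYT), which give the inequality with room to spare; the row-$1$ seam is automatic since the skew entries are $\geq 2$. Once compatibility is established, the reconstructed filling is a genuine SSYT of shape $\lambda$ with content $(n)^{r+1}$, so the map is onto, completing the bijection and hence the proof.
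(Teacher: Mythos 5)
Your proof is correct and follows essentially the same route as the paper's: peel off the full-height columns on the left, observe that the $n$ copies of $1$ are forced into the first row of that block, show the block is determined by its content, and check row-weakness along the seam via ``end of row $i$ in the block is $\leq i+1$, start of row $i$ in the shifted remainder is $\geq i+1$''. The only cosmetic differences are that you cut after column $n$ rather than after column $\lambda_r$, and you package the rigidity of the rectangular block via the omitted-column-value encoding of $r\times n$ SSYT on $\{1,\dots,r+1\}$, where the paper uses a greedy filling together with a row-by-row induction.
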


\begin{proof}
	Let $\mu$ be the partition obtained by deleting the left-most $\lambda_r$ many columns (which all have height $r$) in the Young diagram of $\lambda$.
	So, $|\mu| = |\lambda|-n\lambda_r \leq  n$.

	To show
	$$K_{\lambda,(n)^{r+1}}=\dim(L_r(\lambda))=\dim(L_r(\mu)),$$ 
	it suffices to show that the number of SSYT of shape $\lambda$ with exactly $n$ copies of each of the integers $1,2,\cdots,r+1$ is the same as the number of SSYT of shape $\mu$ with entries in $1,2,\cdots,r$ (with no restrictions on their frequencies); see \eqref{eqn:dim}.
	Denote the former set of SSYT by $S_\lambda$ and the latter by $S_\mu$. We produce a bijection between these sets.
	
	Let $T_\lambda\in S_\lambda$. Denote the tableau formed by the first $\lambda_r$ columns of $T_\lambda$ by $T_l$.
	Since we must use $n$ many $1$s in $T_\lambda$ and since $\lambda_r\geq n$, the $n$ left-most entries in the top row of $T_l$ (and hence of $T_\lambda$)
	must all be $1$. This also means that no $1$s can now appear outside of $T_l$.
	The portion of $T_\lambda$ to the right of $T_l$ has shape $\mu$. In this tableau, each entry must now be at least $2$. Subtracting $1$ from each entry, we obtain an SSYT $T_\mu\in S_\mu$.
	We will denote this process as: $$T_\lambda = T_l \cup (T_\mu +1),$$
	in particular, we use the notation $T_\mu +d$ to increase each entry of a
	given tableau by $d$.
	
	Our required map is:
	\begin{align*}
		\varphi: S_\lambda &\rightarrow S_\mu\\
		T_\lambda&\mapsto T_\mu,
	\end{align*}
	which we will now show to be a bijection.
	An example with $n=7$, $r=3$, $\lambda = (11,9,8)$, and $\mu=(3,1)$ is as follows: 
	\begin{align*}
		T_\lambda =
		\begin{matrix}
		\begin{ytableau}
				1 & 1 & 1 & 1 & 1 &1 & 1 & 2 & *(gray!25) 3 & *(gray!25) 3 & *(gray!25) 4\\
			2 & 2 & 2 & 2 & 2 &2 & 3 & 3 & *(gray!25) 4 \\
			3 & 3 & 3 & 4 & 4 &4 & 4 & 4
		\end{ytableau}
		\end{matrix}
		\,\,
		\longmapsto 
		\,\,
		T_\mu =
		\begin{matrix}
			\begin{ytableau}
				*(gray!25) 2 & *(gray!25) 2 & *(gray!25) 3\\
				*(gray!25) 3
			\end{ytableau}
		\end{matrix}
	\end{align*}		
	
	First, we argue that $\varphi$ is injective. 
	
	Suppose $\varphi(T_\lambda) = \varphi(T_\lambda')$, i.e., $T_\mu=T_\mu'$.
	The following facts are clear.
	\begin{enumerate}
		\item 	For each $1\leq i\leq r+1$, the number of occurrences of $i$ in $T_l$ is the same as the number of occurrences of $i$ in $T'_l$. 
		\item Due to column strict condition, for all $j$, $j$th rows of $T_l$ and $T'_l$ can only have entries belonging to $\{j,j+1\}$.
	\end{enumerate}		
	Now we have the following inductive argument:		
	\begin{enumerate}
		\item $1$s can only appear at the far left of the first row.  Thus the locations of $1$s in $T_l$ and $T'_l$ match exactly.
		\item If any other number appears in the first row of $T_l$ or $T'_l$, it must now be $2$. Thus, the first rows of  $T_l$ and $T'_l$ match.
		\item Continuing, the second rows can only have numbers $2$ and $3$, with twos appearing to the left of $3$s, and so the second rows of $T_\lambda$ and $T_\lambda'$ must also match.
		\item Continuing by induction, it is now clear that $T_l=T'_l$. 
	\end{enumerate} 
	Since we are working under the assumption that $T_\mu=T_\mu'$, we now have $$T_\lambda=T_l\cup (T_\mu+1)=T'_l\cup (T_\mu'+1)=T_\lambda'.$$
	
	Next, we show that $\varphi$ is surjective.
	
	Let $T_\mu\in S_\mu$ (recall that this means that $T_\mu$ is an SSYT of shape $\mu$ and has entries from $1,2,\cdots, r$). For $1\leq i\leq r+1$, let $n_i$ be the number of times $i$ appears in $T_\mu+1$. Note that $n_1=0$. Clearly, $n\geq |\mu|\geq n_i$, so $n\geq n_i$ for each $n_i$.	
	Start with $n-n_i$ many copies of $i$ for each $1\leq i \leq r+1$. 
	In total, we thus have 
	$$\sum_{1\leq i \leq r+1}(n-n_i) = (r+1)n - \sum_i n_i = (r+1)n - |\mu|=r\lambda_r$$
	many numbers available. 
	
	Consider the tableau $T_l$ with $r$ rows and $\lambda_r$ columns, filled with $n-n_i$ copies of each $1\leq i\leq r+1$ in the following way. Fill these numbers starting from top left, continuing from left to right in each row while traversing the rows top to bottom. First fill in all available copies of $1$. Then use all the $2$s, and continue until all numbers are exhausted.

	We claim that $T_l$ is an SSYT. All we need to show is that the entries strictly increase in columns. Suppose that two copies of some $i$ appear in the same column in $T_l$. A moment's thought shows that in this case $n-n_i>\lambda_r$. This is not possible, since $\lambda_r\geq n$.
	Thus, $T_l$ is a SSYT. In particular, the last entry in $j$th column of $T_l$ is at most $j+1$.
	
	Now we show that this $T_l$ can be fused with $T_{\mu}+1$ to obtain an SSYT of shape $\lambda$.	
	Since $T_\mu+1$ is an SSYT with entries in $\{2,\cdots, r+1\}$, column strictness implies that the first entry in its $j$th row (if the row is non-empty) must be at least $j+1$. This means that when we consider $T_l\cup (T_\mu+1)$,
	at the interface of $j$th row, the entry at the end of $T_l$ is at most $j+1$ and the beginning of $(T_\mu+1)$ is at least $j+1$. Consequently, the entries increase weakly in each row. All in all, $T_l\cup (T_\mu+1) \in S_\lambda$ and it clearly maps to $T_\mu$ under $\varphi$.
\end{proof}

\section{Torus links and Characters}
\label{sec:mainthm}
Fix $r\geq 2$. In this section, we will analyze  $\la{sl}_r$ invariants of torus links $T(c,cp)$ with $1\leq c\leq r+1$, where each of the $c$ components is coloured with the irreducible module $L_r(n\Lambda_1)$.
\subsection{Number of components is $c\leq r$}
In what follows, we let $c\leq r$. Recalling \eqref{eqn:ourjones} we have
\begin{align*}
	J_r(c,pc;n)=	
	\sum_{\substack{\lambda: \lambda\vdash nc\\ \ell(\lambda)\leq c}} K_{\lambda, (n)^{c}}
	\cdot
	q^{\frac{p}{2}\kappa_{\lambda}}
	\cdot s_\lambda(q^{\frac{r-1}{2}},q^{\frac{r-3}{2}},\cdots,q^{\frac{1-r}{2}}).
\end{align*}

With $\lambda\vdash nc$ and $\ell(\lambda)\leq c$, we get:
\begin{align*}
	s_\lambda&(q^{\frac{r-1}{2}},q^{\frac{r-3}{2}},\cdots,q^{\frac{1-r}{2}})
	=\prod_{1\leq i<j\leq r}
	\frac{q^{\frac{1}{2} (\lambda_i-\lambda_j+j-i)}- q^{-\frac{1}{2} (\lambda_i-\lambda_j+j-i)}}
	{q^{\frac{1}{2} (j-i)}- q^{-\frac{1}{2} (j-i)}}\notag\\
	&=\prod_{1\leq i<j\leq c}
	\frac{q^{\frac{1}{2} (\lambda_i-\lambda_j+j-i)}- q^{-\frac{1}{2} (\lambda_i-\lambda_j+j-i)}}
	{q^{\frac{1}{2} (j-i)}- q^{-\frac{1}{2} (j-i)}}
	\prod_{\substack{1\leq i\leq c < j \leq r}}
	\frac{q^{\frac{1}{2} (\lambda_i+j-i)}- q^{-\frac{1}{2} (\lambda_i+j-i)}}
	{q^{\frac{1}{2} (j-i)}- q^{-\frac{1}{2} (j-i)}}
	\notag\\
	&=s_\lambda(q^{\frac{c-1}{2}},q^{\frac{c-3}{2}},\cdots, q^{\frac{1-c}{2}})
	\prod_{\substack{1\leq i\leq c < j \leq r}}
	\frac{q^{\frac{1}{2} (\lambda_i+j-i)}- q^{-\frac{1}{2} (\lambda_i+j-i)}}
	{q^{\frac{1}{2} (j-i)}- q^{-\frac{1}{2} (j-i)}}
	\notag\\
	&=q^{-\frac{nc(r-c)}{2}}s_\lambda(q^{\frac{c-1}{2}},q^{\frac{c-3}{2}},\cdots, q^{\frac{1-c}{2}})
	\prod_{\substack{1\leq i\leq c < j \leq r}}
	\frac{1-q^{\lambda_i+j-i}}
	{1-q^{j-i}}
\end{align*}

Let 
\begin{align*}
	f_{n,c,r,p}=q^{\frac{p}{2}\left(-n^2c+nc^2\right)+\frac{nc(r-c)}{2}}\prod_{1\leq i\leq c<j\leq r}(1-q^{j-i}).
\end{align*}

We then have:
\begin{align}
	&f_{n,c,r,p}J_r(c,pc;n)\notag\\
	&\,\,=	
	\sum_{\substack{\lambda: \lambda\vdash nc\\ \ell(\lambda)\leq c}} K_{\lambda, (n)^{c}}
	\cdot
	q^{\frac{p}{2}\left( \kappa_{\lambda}-\frac{1}{c}|\lambda|^2+c|\lambda| \right) }
	\cdot s_\lambda(q^{\frac{c-1}{2}},q^{\frac{r-3}{2}},\cdots,q^{\frac{1-c}{2}})\notag\\
	&\quad\quad\cdot
	\prod_{1\leq i\leq c<j\leq r}(1-q^{\lambda_i+j-i}).
	\label{eqn:Jc<=r}
\end{align}

We now define a set $W_n\subseteq P_c^+\cap Q_c$ which is in a natural bijection with the set of $\lambda$s appearing in this sum.
Suppose $\mu = a_1\Lambda_1+\cdots+a_{c-1}\Lambda_{c-1}\in P_{c}^+$ ($a_i\in \ZZ_{\geq 0}$ for all $i$)
such that $a_1+2a_2+\cdots+(c-1)a_{c-1}\leq n$. 
Further assume that $\mu\in P_c^+\cap Q_c$ which is equivalent to assuming that $a_1+2a_2+\cdots+(c-1)a_{c-1}$ is divisible by $c$.
Let $W_n\subseteq P_c^+\cap Q_c$ be the set of such weights.

Given $\mu=a_1\Lambda_1+\cdots+a_{c-1}\Lambda_{c-1}\in W_n$, choose
\begin{align*}
	a_c= n - \frac{a_1+2a_2+\cdots+(c-1)a_{c-1}}{c}\in \ZZ_{\geq 0}
\end{align*}
and consider the partition 
\begin{align*}
	\lambda = (a_1+a_2+\cdots+a_{c}, a_2+\cdots+a_c,\cdots, a_c,0,0,\cdots).
\end{align*}
It is clear that $\lambda\vdash nc$, $\ell(\lambda)\leq c$, $L_c(\mu)\cong L_c(\lambda)$.
This assignment $\mu\mapsto \lambda$ is easily seen to be a bijection from $W_n$ to the set of $\lambda$ that appear in the sum \eqref{eqn:Jc<=r}.
Moreover, we have
$$K_{\lambda,(n)^c}=\dim(L_c(\mu)_0)$$ from Proposition \ref{prop:dim0} and  $$\kappa_{\lambda}-\frac{1}{c}|\lambda|^2+c|\lambda|=(\mu,\mu+2\weylvec)$$
from \eqref{eqn:kappaa} and \eqref{eqn:thetakappa}.

We may thus rewrite \eqref{eqn:Jc<=r}:
\begin{align*}
	&f_{n,c,r,p}J_r(c,pc;n)\notag\\
	&\,\,=	
	\sum_{\mu\in W_n}
	\dim(L_c(\mu)_0)
	\cdot
	q^{\frac{p}{2}\left( \mu,\mu+2\weylvec\right) }
	\cdot s_\mu(q^{\frac{c-1}{2}},q^{\frac{r-3}{2}},\cdots,q^{\frac{1-c}{2}})\notag\\
	&\quad\quad\cdot
	\prod_{1\leq i\leq c<j\leq r}(1-q^{n+a_i+\cdots+a_{c-1} - \frac{a_1+2a_2+\cdots+(c-1)a_{c-1}}{c} +j-i}).
\end{align*}
Finally, we observe that
$W_n\subseteq W_{n+1}$ and that $\cup_{n\geq 0}W_n=Q_c\cap P_c^+$.

Now letting $n\rightarrow\infty$ and recalling \eqref{eqn:singletchar}, we easily deduce our first main theorem:
\begin{thm} Fix $r\geq c\geq 2$. Then,
	\begin{align*}
		\lim_{n\rightarrow\infty }
		&q^{\frac{p}{2}(-nc^2+nc^2)+\frac{nc(r-c)}{2}}J_{r}(c,pc;n)\notag\\
		&\,\,=\frac{1}{\prod_{1\leq i\leq c<j\leq r}(1-q^{j-i})}\frac{(q)_\infty^{c-1}}
		{\prod_{1\leq i<j\leq c}(1-q^{j-i})}
		\overline{\ch}(W^0(p)_{Q_c}).
	\end{align*}	
\end{thm}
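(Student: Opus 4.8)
The plan is to pass to the $n\to\infty$ limit directly in the reformulation of $f_{n,c,r,p}J_r(c,pc;n)$ obtained just above, after dividing through by the $n$-independent quantity $\prod_{1\leq i\leq c<j\leq r}(1-q^{j-i})$. Since
$$\frac{f_{n,c,r,p}}{\prod_{1\leq i\leq c<j\leq r}(1-q^{j-i})}=q^{\frac{p}{2}(-n^2c+nc^2)+\frac{nc(r-c)}{2}}$$
is exactly the shift appearing on the left-hand side of the theorem, this division gives
$$q^{\frac{p}{2}(-n^2c+nc^2)+\frac{nc(r-c)}{2}}J_r(c,pc;n)=\frac{1}{\prod_{1\leq i\leq c<j\leq r}(1-q^{j-i})}\,\Sigma_n,$$
where $\Sigma_n$ abbreviates the sum over $\mu\in W_n$ of $\dim(L_c(\mu)_0)\,q^{\frac{p}{2}(\mu,\mu+2\weylvec)}\,s_\mu(q^{\frac{c-1}{2}},\cdots,q^{\frac{1-c}{2}})$ times the spurious product $\prod_{1\leq i\leq c<j\leq r}(1-q^{n+\cdots})$. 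Comparing with \eqref{eqn:singletchar}, it suffices to prove that $\lim_{n\to\infty}\Sigma_n=\sum_{\mu\in Q_c\cap P_c^+}\dim(L_c(\mu)_0)\,q^{\frac{p}{2}(\mu,\mu+2\weylvec)}\,s_\mu(q^{\frac{c-1}{2}},\cdots,q^{\frac{1-c}{2}})$, since by \eqref{eqn:singletchar} this latter sum is precisely $\frac{(q)_\infty^{c-1}}{\prod_{1\leq i<j\leq c}(1-q^{j-i})}\,\overline{\ch}(W^0(p)_{Q_c})$.

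Two effects govern $\Sigma_n$ as $n$ grows. First, the index set increases, $W_n\subseteq W_{n+1}$ with $\bigcup_nW_n=Q_c\cap P_c^+$, so each fixed $\mu$ eventually enters the sum and never leaves. Second, for fixed $\mu$ the spurious product tends to $1$, because its exponents are all of the form $n+O(1)\to+\infty$, so the deviation from $1$ involves only arbitrarily high powers of $q$. The plan is to make both statements precise by working coefficient-by-coefficient in $q$ (equivalently $q^{1/2}$): fix a power $q^k$, show that its coefficient in $\Sigma_n$ stabilizes as $n\to\infty$, and identify the stable value with the coefficient of $q^k$ in the claimed limiting sum.

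The one point needing care --- and the main obstacle --- is the legitimacy of this interchange of limit and sum, i.e.\ that for each fixed $k$ only finitely many $\mu\in Q_c\cap P_c^+$ contribute to the coefficient of $q^k$. This follows from a growth comparison: the lowest power of $q$ in the $\mu$-summand is $\frac{p}{2}(\mu,\mu+2\weylvec)-\tfrac12\deg s_\mu(q^{\frac{c-1}{2}},\cdots,q^{\frac{1-c}{2}})$, since the spurious product contributes only non-negative powers. By the product formula for the principal specialization, $\deg s_\mu$ grows only linearly in $\|\mu\|$, whereas $(\mu,\mu+2\weylvec)=\|\mu\|^2+2(\mu,\weylvec)$ involves a positive-definite quadratic form and grows like $\|\mu\|^2$; hence this minimal exponent tends to $+\infty$ with $\|\mu\|$, and only finitely many $\mu$ reach down to degree $k$. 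For that finite set one chooses $n$ large enough that all of them lie in $W_n$ and that every spurious exponent $n+O(1)$ exceeds $k$; the coefficient of $q^k$ in $\Sigma_n$ then already equals its value for the full sum over $Q_c\cap P_c^+$. Letting $n\to\infty$ and invoking \eqref{eqn:singletchar} yields the theorem. Apart from this convergence bookkeeping, the argument is a direct substitution of facts already in hand: the bijection $\mu\mapsto\lambda$ between $W_n$ and the partitions in \eqref{eqn:Jc<=r}, the identities $K_{\lambda,(n)^c}=\dim(L_c(\mu)_0)$ and $\kappa_\lambda-\frac1c|\lambda|^2+c|\lambda|=(\mu,\mu+2\weylvec)$, and the normalization \eqref{eqn:singletchar}.
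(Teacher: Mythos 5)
Your proposal is correct and follows essentially the same route as the paper: the same rewriting of $f_{n,c,r,p}J_r(c,pc;n)$ as a sum over $\mu\in W_n$, the same exhaustion $W_n\subseteq W_{n+1}$ with $\bigcup_n W_n=Q_c\cap P_c^+$, and the same comparison with \eqref{eqn:singletchar}. The only difference is that you make explicit the coefficient-stabilization argument (quadratic growth of $\frac{p}{2}(\mu,\mu+2\weylvec)$ versus linear growth of $\deg s_\mu$, plus the non-negativity of the exponents in the spurious product) that the paper compresses into ``letting $n\rightarrow\infty$ \dots we easily deduce,'' which is a welcome but not substantively different elaboration.
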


\begin{rem}
	The $r=c=2$ case of this theorem is established in \cite{Haj} using skein theory. The $r=3$, $c=2$ case of this theorem appears in \cite{Yua-torus}, where the theory of $\mathrm{A}_2$ webs is used.
\end{rem}

\subsection{Number of components is $c=r+1$}

Fix $i$ with $0\leq i\leq r-1$ and suppose that $n\equiv i\,\,(\text{mod}\,\,r)$.
Recalling \eqref{eqn:ourjones}, we have
\begin{align*}
	J_r(r+1,p(r+1);n)=	\sum_{\substack{\lambda: \lambda\vdash n(r+1)\\ \ell(\lambda)\leq r}} K_{\lambda, (n)^{r+1}}
	\cdot
	q^{\frac{p}{2}\kappa_{\lambda}}
	\cdot s_\lambda(q^{\frac{r-1}{2}},q^{\frac{r-3}{2}},\cdots,q^{\frac{1-r}{2}}).
\end{align*}
Therefore,
\begin{align*}
	&q^{\frac{p}{2}\left(-\frac{n^2(r+1)^2}{r} + nr(r+1)\right)}J_r(r+1,p(r+1);n)\notag\\
	&\,\,=\sum_{\substack{\lambda: \lambda\vdash n(r+1)\\ \ell(\lambda)\leq r}} K_{\lambda, (n)^{r+1}}
	\cdot
	q^{\frac{p}{2}\left(\kappa_{\lambda}-\frac{1}{r}|\lambda|^2+r|\lambda|\right)}
	\cdot s_\lambda(q^{\frac{r-1}{2}},q^{\frac{r-3}{2}},\cdots,q^{\frac{1-r}{2}})\notag\\
	&\,\,=
	\left(\sum_{\substack{\lambda: \lambda\vdash n(r+1)\\ \ell(\lambda)\leq r \\ \lambda_r\geq n}} +
	\sum_{\substack{\lambda: \lambda\vdash n(r+1)\\ \ell(\lambda)\leq r\\ \lambda_r<n}} 
	\right)
	K_{\lambda, (n)^{r+1}}
	\cdot
	q^{\frac{p}{2}\left(\kappa_{\lambda}-\frac{1}{r}|\lambda|^2+r|\lambda|\right)}
	\cdot s_\lambda(q^{\frac{r-1}{2}},q^{\frac{r-3}{2}},\cdots,q^{\frac{1-r}{2}})\notag\\
	&= S_1(n)+S_2(n).
\end{align*}

We analyze the sums $S_1(n)$ and $S_2(n)$ separately.

Let us start with $S_1(n)$.  

Define  $C_n\subseteq P_r^+\cap( Q_r + i\Lambda_1)$ to be the set of weights
$\mu\in P_r^+\cap( Q_r + i\Lambda_1)$ such that $\mu = a_1\Lambda_1+\cdots+a_{r-1}\Lambda_{r-1}$ ($a_i\in\ZZ_{\geq 0}$ for all $i$) and $a_1+2a_2+\cdots+(r-1)a_{r-1}\leq n$.

Given a $\mu\in C_n$, 
start with the partition $\widehat{\mu}=(a_1+a_2+\cdots+a_{r-1},a_2+\cdots+a_{r-1},\cdots,a_{r-1},0,\dots)$ and adjoin $n+\frac{n-|\widehat{\mu}|}{r}$ many columns of height $r$ to the left of the Young diagram of $\widehat{\mu}$ to obtain a partition $\lambda$. 
The partition $\lambda$ has weight $nr+n$, $\ell(\lambda)\leq r$, and $\lambda_r\geq n$. 

It can be easily seen that this procedure is a bijection of $C_n$ with the set of $\lambda$ appearing in $S_1(n)$. Importantly, under the correspondence, $$\kappa_{\lambda}-\frac{1}{r}|\lambda|^2+r|\lambda|=(\mu,\mu+2\weylvec),$$ 
from \eqref{eqn:kappaa} and \eqref{eqn:thetakappa}.
Additionally, due to Proposition \ref{prop:r+1dim},
$$K_{\lambda,(n)^{r+1}}=\dim(L_r(\lambda))=\dim(L_r(\mu)).$$

Combining all of this we may now write the first sum as:
\begin{align*}
	S_1(n)=\sum_{\mu\in C_n}
	\dim(L_r(\mu))
	\cdot
	q^{\frac{p}{2}(\mu,\mu+2\weylvec)}
	\cdot s_\mu(q^{\frac{r-1}{2}},q^{\frac{r-3}{2}},\cdots,q^{\frac{1-r}{2}})
\end{align*}
Now we observe that 
$C_i\subseteq C_{i+r}\subseteq C_{i+2r}\subseteq\cdots$ with 
$\cup_{j\geq 0} C_{i+rj}=P_r^+\cap (Q_r+i\Lambda_1)$.
Comparing with \eqref{eqn:tripletchar},
\begin{align*}
\lim_{\substack{n \equiv i\,\,(\text{mod}\,\,r)\\n\rightarrow\infty}} S_1(n) = 	
\frac{(q)_\infty^{r-1}}
{\prod_{1\leq i<j \leq r}(1-q^{j-i})}
\overline{\ch}(W(p,\sqrt{p}\,i\Lambda_1)_{Q_r}).
\end{align*}

For the second sum $S_2(n)$, our aim is to show that it vanishes as $n\rightarrow \infty$.

So, let $\lambda\vdash n(r+1)$ with $\ell(\lambda)\leq r$ and $\lambda_r<n$ be an arbitrary partition that appears in $S_2(n)$.
Suppose that $a_i=\lambda_i-\lambda_{i+1}\in\ZZ_{\geq 0}$ for all $i$ (note that $\lambda_i=0$ for all $i>r$).
Since $\lambda_r=a_r<n$, we have:
\begin{align*}
	\sum_{1\leq i\leq r-1}ia_i = \lambda_1+\cdots+\lambda_{r-1}+\lambda_r-r\lambda_r>n.
\end{align*}
The lowest exponent of $q$ in the summand corrresponding to $\lambda$ in $S_2(n)$ is:
\begin{align*}
	\frac{p}{2}&\left(\kappa_{\lambda}-\frac{1}{r}|\lambda|^2+r|\lambda|\right)
	-\frac{1}{2}\sum_{1\leq i<j\leq r}\lambda_i-\lambda_j\notag\\
	&=\frac{p}{2}\left( \sum_{1\leq i\leq j\leq r-1}a_ia_j\left(\min(i,j)-\frac{ij}{r}\right) 
	+\sum_{1\leq i\leq r-1}i(r-i)a_i\right)-\frac{1}{2}\sum_{1\leq i\leq r-1}ia_i\notag\\
	&\geq 
	\frac{p}{2} \sum_{1\leq i\leq r-1}a_i^2\left(i-\frac{i^2}{r}\right) 
	+\frac{p}{2}\sum_{1\leq i\leq r-1}ia_i-\frac{1}{2}\sum_{1\leq i\leq r-1}ia_i\notag\\
	&\geq \frac{p}{2r} \sum_{1\leq i\leq r-1}ia_i
	+\frac{p-1}{2}\sum_{1\leq i\leq r-1}ia_i\notag\\
	&\geq \left( \frac{p}{2r} +\frac{p-1}{2}\right) n.
\end{align*}	
Thus, for any fixed $r\geq 2$ and $p\geq 1$, this approaches infinity as $n\rightarrow \infty$.

We now conclude our second main theorem:

\begin{thm} Fix $r\geq 2$. Let $0\leq i\leq r-1$. Then,
\begin{align*}
&\lim_{\substack{n \equiv i\,\,(\text{mod}\,\,r)\\n\rightarrow\infty}}q^{\frac{p}{2}\left(-\frac{n^2(r+1)^2}{r} + nr(r+1)\right)}J_r(r+1,p(r+1);n) \notag\\
&\quad = 	
\frac{(q)_\infty^{r-1}}
{\prod_{1\leq i<j \leq r}(1-q^{j-i})}
\overline{\ch}(W(p,\sqrt{p}\,i\Lambda_1)_{Q_r}).
\end{align*}
\end{thm}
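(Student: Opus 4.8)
The plan is to assemble the two pieces $S_1(n)$ and $S_2(n)$ produced by the preceding analysis. The prefactor $q^{\frac{p}{2}\left(-\frac{n^2(r+1)^2}{r}+nr(r+1)\right)}$ is chosen precisely so that, since every $\lambda$ in the sum \eqref{eqn:ourjones} satisfies $|\lambda|=n(r+1)$, multiplying through converts the exponent $\frac{p}{2}\kappa_\lambda$ into $\frac{p}{2}\left(\kappa_\lambda-\frac{1}{r}|\lambda|^2+r|\lambda|\right)$. By \eqref{eqn:kappaa} and \eqref{eqn:thetakappa} this is exactly $\frac{p}{2}(\mu,\mu+2\weylvec)$ under the partition-to-weight correspondence, which is the shape of the exponent appearing in the triplet character \eqref{eqn:tripletchar}. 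Splitting the sum according to whether $\lambda_r\geq n$ or $\lambda_r<n$ yields $S_1(n)+S_2(n)$, and I would treat these two ranges by genuinely different mechanisms.

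For $S_1(n)$ I would invoke Proposition \ref{prop:r+1dim}: precisely because $\lambda_r\geq n$, the Kostka number $K_{\lambda,(n)^{r+1}}$ equals $\dim(L_r(\lambda))=\dim(L_r(\mu))$. The column-adjoining map $\mu\mapsto\lambda$ identifies the partitions occurring in $S_1(n)$ with the weights $\mu\in C_n$, so $S_1(n)$ rewrites as a finite truncation of the triplet character sum. The coset in which $\mu$ lives is fixed by the residue of $|\lambda|=n(r+1)$ modulo $r$, which equals $n\bmod r=i$; this is the reason the limit must be constrained to $n\equiv i\,\,(\text{mod}\,\,r)$, and it is what selects the summand $W(p,\sqrt{p}\,i\Lambda_1)_{Q_r}$ rather than another coset. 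Using the nesting $C_i\subseteq C_{i+r}\subseteq\cdots$ with $\bigcup_{j\geq 0}C_{i+rj}=P_r^+\cap(Q_r+i\Lambda_1)$, I would take the coefficientwise limit and match it against \eqref{eqn:tripletchar}, obtaining the claimed right-hand side.

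For $S_2(n)$ the goal is to show it vanishes in the limit. Here I would estimate the lowest power of $q$ in each summand: writing $\kappa_\lambda$ in the coordinates $a_i=\lambda_i-\lambda_{i+1}$ via \eqref{eqn:kappaa}, combining it with the degree contribution of the Schur specialization, and using that $\lambda_r<n$ forces $\sum_{1\leq i\leq r-1}ia_i>n$, one bounds the minimal exponent below by $\left(\frac{p}{2r}+\frac{p-1}{2}\right)n$, which tends to $\infty$. Hence every fixed coefficient of $S_2(n)$ is eventually zero, so $S_2(n)\to 0$ and only the $S_1$ contribution survives. The main obstacle is the $S_2(n)$ estimate: one must discard the cross terms $a_ia_j$ in $\kappa_\lambda$ in a way that is simultaneously valid (the quadratic form $\min(i,j)-ij/r$ is positive semidefinite, so retaining only the diagonal is a genuine lower bound) and strong enough to beat the negative contribution $-\frac{1}{2}\sum_{i<j}(\lambda_i-\lambda_j)$ coming from the smallest monomial of the Schur polynomial. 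Assembling the $S_1$ limit with the vanishing of $S_2$ then yields the theorem.
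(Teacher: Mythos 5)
Your proposal follows the paper's proof essentially verbatim: the same prefactor manipulation via \eqref{eqn:thetakappa}, the same split into $S_1(n)$ and $S_2(n)$ according to $\lambda_r\geq n$ versus $\lambda_r<n$, the same use of Proposition \ref{prop:r+1dim} and the bijection with $C_n$ for $S_1$, and the same lower bound $\left(\frac{p}{2r}+\frac{p-1}{2}\right)n$ on the minimal exponent in $S_2$. One small correction: the cross terms $a_ia_j\left(\min(i,j)-\frac{ij}{r}\right)$ may be discarded not because the quadratic form is positive semidefinite (which alone would not justify retaining only the diagonal), but because each coefficient $\min(i,j)-\frac{ij}{r}=\min(i,j)\,\frac{r-\max(i,j)}{r}$ is individually non-negative and the $a_i$ are non-negative integers, so every off-diagonal summand is $\geq 0$.
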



\providecommand{\oldpreprint}[2]{\textsf{arXiv:\mbox{#2}/#1}}\providecommand{\preprint}[2]{\textsf{arXiv:#1
		[\mbox{#2}]}}

\end{document}